\numberwithin{equation}{section}
\numberwithin{figure}{section}
  \theoremstyle{remark}
  \newtheorem*{rem*}{\protect\remarkname}
\theoremstyle{plain}
\newtheorem{thm}{\protect\theoremname}
  \theoremstyle{plain}
  \newtheorem{lem}[thm]{\protect\lemmaname}
  \theoremstyle{plain}
  \newtheorem{cor}[thm]{\protect\corollaryname}
  \theoremstyle{plain}
  \newtheorem{prop}[thm]{\protect\propositionname}
  \providecommand{\corollaryname}{Corollary}
  \providecommand{\lemmaname}{Lemma}
  \providecommand{\propositionname}{Proposition}
  \providecommand{\remarkname}{Remark}
\providecommand{\theoremname}{Theorem}
\begin{document}

\title[A Note on Lipschitz Continuity of Solutions of Poisson Equations]{A Note on Lipschitz Continuity of Solutions of Poisson Equations
in Metric Measure Spaces}

\author{Martin Kell}

\email{mkell@mis.mpg.de}

\address{Max-Planck-Institute for Mathematics in the Sciences, Inselstr. 22,
D-04103 Leipzig, Germany}

\thanks{The author wants to thank is advisor Prof. Jürgen Jost and the MPI-MiS
for providing an inspiring research environment and the IMPRS ``Mathematics
in the Sciences'' for financial support.}

\keywords{$L^{p}$-Laplacian, Lipschitz regularity, $RCD(K,N)$-spaces}
\begin{abstract}
In this note we show how to adjust some proofs of Koskela et. al 2003
and Jiang 2011 in order to prove that in certain spaces $(X,d,\mu)$,
like $RCD(K,N)$-spaces, every Sobolev function with local $L^{p}$-Laplacian
and $p>\dim\mu$ is locally Lipschitz continuous.
\end{abstract}
\maketitle
In \cite{Koskela2003} and \cite{Jiang2011} Koskela et. al. and Jiang
showed that functions with local $L^{p}$-Laplacian, in particular
harmonic functions, are locally Lipschitz continuous if the space
is locally Ahlfors regular, a local uniform Poinca\'{e} inequality
holds and its heat flows admits a kind of Sobolev-Poincar\'{e} inequality.
In this note, we want to show that Ahlfors regularity can be replaced
by local uniform doubling and a weak upper bound on the volume growth.
In particular, every $RCD(K,N)$-space satisfies these conditions
(see \cite{Gigli2012} for nicely developed calculus and further references).
Since the proofs are almost the same as the ones in \cite{Jiang2011}
we will only show the necessary adjustments and refer to \cite{Koskela2003,Jiang2011}
for notation and precise statement.
\begin{rem*}
After this note was finished, we learnt that Jiang \cite{Jiang2013}
worked on an extension of \cite{Jiang2011}. Under similar assumptions,
but without the upper bound on the volume growth, he shows that functions
with local $L^{\infty}$-Laplacian are locally Lipschitz continuous
and additionally gives some gradient bounds for those functions. 
\end{rem*}
Throughout this note we assume that $(X,d,\mu)$ is a complete metric
measure space which is infinitesimal Hilbertian (see Gigli \cite{Gigli2012}).
For those spaces the Cheeger energy 
\[
\operatorname{Ch}(u)=\int|\nabla u|_{w}^{2}d\mu
\]
is a Dirichlet form, where $|\nabla u|_{w}$ is the minimal weak upper
gradient for $u$. We will drop the subscript $w$ and use the following
notation 
\[
\mathcal{E}(u,v)=\int\langle\nabla u,\nabla v\rangle d\mu
\]
such that 
\[
\mathcal{E}(u,u)=\operatorname{Ch}(u).
\]
Because $\operatorname{Ch}$ is a Dirichlet form the Sobolev space
\[
W^{1,2}(X)=\{u\in L^{2}(X)|\operatorname{Ch}(u)<\infty\}
\]
is a Hilbert space and for a dense subset of $W^{1,2}(X)$ we can
define the Laplacian $\Delta u\in L^{2}$ such that 
\[
\mathcal{E}(u,v)=-\int v\Delta ud\mu.
\]
Note that whereas $\langle\nabla u,\nabla v\rangle$ is a well-defined
object in $L^{1}$, $\nabla u$ is not defined. 
\begin{rem*}
Koskela et. al. \cite{Koskela2003} and Jiang \cite{Jiang2011} use
Cheeger derivatives, which are not necessary to develop the regularity
theory. Furthermore, it is not necessary to start with the Cheeger
energy. If we assume that a space is equipped with a Dirichlet form
$\mathcal{F}$ such that the induced metric $d_{\mathcal{F}}$ is
Lipschitz equivalent to the original metric, then the associated Laplacian
satisfies the Leibniz rule and a similar regularity theory could be
developed if doubling, Poincar\'{e} and Sobolev-Poincar\'{e} holds.
Furthermore, according to \cite{Koskela2012} if $\mathcal{F}$ satisfies
the Bakry-\'Emery condition then it is the Cheeger energy to $d_{\mathcal{F}}$
$ $ and the space $(X,d_{\mathcal{F}},\mu)$ is an $RCD(K,\infty)$-space.
However, if the Sobolev-Poincar\'{e} inequality holds uniformly independent
of $x$ then according to \cite{Bakry1997} the Bakry-\'Emery condition
holds.
\end{rem*}
We will assume the following doubling condition on the measure $\mu$:
for fixed $R>0$ and all $0<r<R$ there are constants $d_{R}>0$ and
$C_{R}>0$ such that 
\[
d_{R}r^{Q}\le\mu(B_{r}(x))
\]
 and 
\[
\mu(B_{2r}(x))\le C_{R}\mu(B_{r}(x)).
\]
Since we only want to show which adjustments are necessary, we will
further assume $Q\ge2$. This is no limitation on the spaces, since
every space satisfying the volume growth with $d_{R}$ and $Q$ will
satisfy it with $Q'\ge Q$ and some different $d_{R}^{'}$. We remark
that the case $Q\in(1,2)$ can be treated similar to \cite{Jiang2011},
but to keep this note short and simple we leave the details to the
reader. Furthermore, $d_{R}$ could depend on $x$ if it satisfies
certain growth condition (see below).

Every infinitesimal Hilbertian space $X$ admits a natural heat (semi)flow
$T_{t}:L^{2}\to L^{2}$ which is the gradient (semi)flow of its Cheeger
energy. Furthermore, this heat flow admits a heat kernel. In order
to get bounds on the heat kernel and Hölder regularity of functions
with $L^{p}$-Laplacian we also assume that $X$ satisfies a local
uniform Poincar\'{e} inequality. Under these assumptions Sturm \cite{Sturm1995}
showed that the following bounds hold:
\[
\frac{1}{C}\mathfrak{v}(t,x,y)e^{-\frac{d(x,y)^{2}}{C_{1}t}}\le p_{t}(x,y)\le C\mathfrak{v}(t,x,y)e^{-\frac{d(x,y)^{2}}{C_{2}t}}
\]
with 
\[
\mathfrak{v}(t,x,y):=\left(\mu(B_{\sqrt{t}}(x))\right)^{-\frac{1}{2}}\left(\mu(B_{\sqrt{t}}(y))\right)^{-\frac{1}{2}}.
\]

The following lemma will be useful to get estimates for $p_{t}(x,y)$
and $p_{lt}(x,y)$.
\begin{lem}
For any $l>0$ and any $(lt)^{2}<R$ the following is true
\[
\mathfrak{v}(t,x,y)\le\tilde{C}_{l,R}\mathfrak{v}(lt,x,y).
\]
\end{lem}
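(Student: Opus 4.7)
The plan is to rewrite the inequality as a ratio of measures and then iterate the doubling condition. By definition of $\mathfrak{v}$, the claim is equivalent to
\[
\mu(B_{\sqrt{lt}}(x))\,\mu(B_{\sqrt{lt}}(y))\le \tilde{C}_{l,R}^{2}\,\mu(B_{\sqrt{t}}(x))\,\mu(B_{\sqrt{t}}(y)),
\]
so it suffices to bound the ratio $\mu(B_{\sqrt{lt}}(z))/\mu(B_{\sqrt{t}}(z))$ uniformly for $z\in\{x,y\}$ by a constant depending only on $l$ and $R$.

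First I would dispose of the case $l\le 1$: then $\sqrt{lt}\le\sqrt{t}$, and monotonicity of $\mu$ gives the ratio $\le 1$, so $\tilde{C}_{l,R}=1$ works. For $l>1$, I pick the smallest integer $k$ with $2^{k}\ge\sqrt{l}$, namely $k=\lceil\frac{1}{2}\log_{2}l\rceil$, so that $\sqrt{lt}\le 2^{k}\sqrt{t}$. Iterating the doubling condition $k$ times yields
\[
\mu(B_{\sqrt{lt}}(z))\le \mu(B_{2^{k}\sqrt{t}}(z))\le C_{R}^{k}\,\mu(B_{\sqrt{t}}(z)),
\]
and multiplying the corresponding inequalities for $z=x$ and $z=y$ and taking square roots gives the claim with $\tilde{C}_{l,R}=C_{R}^{k}$.

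The only delicate point is verifying that every intermediate radius $2^{j}\sqrt{t}$, $j=0,\ldots,k-1$, lies in the range where the doubling condition is available, so that $C_{R}$ may be invoked at each step. The hypothesis $(lt)^{2}<R$ (read as the statement that $\sqrt{lt}$, and hence every intermediate radius, stays in the admissible regime) is precisely what is required. There is no genuine obstacle here; the lemma is really a bookkeeping statement that packages repeated use of local doubling into a single estimate relating $\mathfrak{v}$ at different time scales.
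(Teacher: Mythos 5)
Your proof is correct and follows essentially the same route as the paper: both reduce the claim to the ratio $\mu(B_{\sqrt{lt}}(z))/\mu(B_{\sqrt{t}}(z))$ for $z\in\{x,y\}$ and control it by the doubling condition. You merely spell out the iteration of doubling and the trivial case $l\le1$, which the paper compresses into the single line $\mu(B_{\sqrt{lt}}(x))\le C_{l,R}\,\mu(B_{\sqrt{t}}(x))$.
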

\begin{proof}
Note that by the doubling property we have 
\[
\mu(B_{\sqrt{lt}}(x))\le C_{l,R}\mu(B_{\sqrt{t}}(x))
\]
and thus 
\begin{eqnarray*}
\mathfrak{v}(t,x,y) & = & \left(\mu(B_{\sqrt{t}}(x))\mu(B_{\sqrt{t}}(y))\right)^{-\frac{1}{2}}\\
 & \le & \tilde{C}_{l,R}\left(\mu(B_{\sqrt{lt}}(x))\mu(B_{\sqrt{lt}}(y))\right)^{-\frac{1}{2}}=\tilde{C}_{l,R}\mathfrak{v}(lt,x,y).
\end{eqnarray*}
\end{proof}
\begin{cor}
\label{cor:heat bound}For any positive function $\varphi$ and $l=2C_{1}/C_{2}$
there is a constant $\hat{C}>0$ such that the following holds
\[
\int\varphi(y)p_{t}(x,y)d\mu(y)\le\hat{C}\int\varphi e^{-\frac{d(x,y)^{2}}{2C_{1}t}}p_{lt}(x,y)d\mu(y).
\]
\end{cor}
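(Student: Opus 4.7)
The strategy is to prove the pointwise comparison $p_t(x,y) \le \hat{C}\, e^{-d(x,y)^2/(2C_1 t)}\, p_{lt}(x,y)$ and then integrate both sides against the nonnegative function $\varphi$; the integral form claimed in the corollary is then immediate.

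To establish the pointwise inequality I would chain three ingredients. First, the Sturm upper bound gives $p_t(x,y) \le C\, \mathfrak{v}(t,x,y)\, e^{-d(x,y)^2/(C_2 t)}$. Second, the lemma just proved replaces $\mathfrak{v}(t,x,y)$ by $\tilde{C}_{l,R}\, \mathfrak{v}(lt,x,y)$, provided one stays in the range of $t$ where $(lt)^2 < R$. Third, the Sturm lower bound, applied to $p_{lt}$ and rearranged, reads $\mathfrak{v}(lt,x,y) \le C\, e^{d(x,y)^2/(C_1 l t)}\, p_{lt}(x,y)$. Composing the three produces
\[
p_t(x,y) \le \hat{C}\, \exp\Bigl(-\tfrac{d(x,y)^2}{t}\bigl(\tfrac{1}{C_2}-\tfrac{1}{C_1 l}\bigr)\Bigr)\, p_{lt}(x,y),
\]
where $\hat{C}$ absorbs the two occurrences of $C$ from Sturm together with $\tilde{C}_{l,R}$.

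The final step is to check that the specific choice $l = 2C_1/C_2$ is tuned precisely so that $1/C_2 - 1/(C_1 l) \ge 1/(2C_1)$, which upgrades the exponential factor to $e^{-d(x,y)^2/(2C_1 t)}$. Plugging in $l$ reduces this to the elementary inequality $2C_1^2 - C_1 C_2 - C_2^2 \ge 0$, i.e.\ $(2C_1+C_2)(C_1-C_2) \ge 0$; this holds under the natural convention on the Sturm constants (namely $C_1 \ge C_2$) that makes the two-sided Gaussian bounds simultaneously valid.

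I do not anticipate any real obstacle here: the corollary is essentially bookkeeping combining the Gaussian heat kernel bounds with the preceding geometric lemma. The only point needing attention is that the given value of $l$ is finely calibrated so that the algebraic inequality closes with the precise exponent $1/(2C_1)$ that will be needed in the sequel, and that the scale condition $(lt)^2 < R$ required to invoke the lemma is inherited from the range of $t$ under consideration.
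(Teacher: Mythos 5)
Your proposal is correct and follows essentially the same route as the paper: Sturm's upper bound for $p_t$, the preceding lemma to pass from $\mathfrak{v}(t,x,y)$ to $\mathfrak{v}(lt,x,y)$, and Sturm's lower bound to recover $p_{lt}$, with the choice $l=2C_1/C_2$ and the convention $C_1\ge C_2$ making the exponent close at $1/(2C_1)$. The only cosmetic difference is that the paper splits the Gaussian factor as $e^{-d^2/(2C_1t)}e^{-d^2/(2C_1t)}$ up front and identifies one factor exactly with $e^{-d^2/(C_2(lt))}$, whereas you track the exponents and verify the inequality $(2C_1+C_2)(C_1-C_2)\ge 0$ at the end.
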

\begin{proof}
By the upper estimate for $p_{t}$ and the lower estimate for $p_{lt}$
we have 
\begin{eqnarray*}
\int\varphi(y)p_{t}(x,y)d\mu(y) & \le & C\int\varphi(y)\mathfrak{v}(t,x,y)e^{-\frac{d(x,y)^{2}}{2C_{1}t}}e^{-\frac{d(x,y)^{2}}{2C_{1}t}}d\mu(y)\\
 & \le & C\cdot\tilde{C}\int\varphi(y)e^{-\frac{d(x,y)^{2}}{2C_{1}t}}\mathfrak{v}(lt,x,y)e^{-\frac{d(x,y)^{2}}{C_{2}(lt)}}d\mu(y)\\
 & \le & C^{2}\cdot\tilde{C}\int\varphi(y)e^{-\frac{d(x,y)^{2}}{2C_{1}t}}p_{lt}(x,y)d\mu(y).
\end{eqnarray*}
\end{proof}
\begin{rem*}
Because of the term $e^{-\frac{d(x,y)^{2}}{2C_{1}t}}$, if $t$ is
chosen sufficiently small, we still get the same estimate with $\hat{C}$
depending also on $x$ if the doubling constants depend on the chosen
point but satisfy some growth condition, i.e. for each $x\in X$ there
is a $C_{R,x}$ such that 
\[
\mu(B_{2r}(x))<C_{R,x}\mu(B_{r}(x))
\]
and 
\[
C_{R,y}\le C_{R,x}e^{Dd(x,y)^{2}}.
\]

\end{rem*}
Note that in the following lemma, only Hölder continuity of functions
with $L^{p}$-Laplacian will be used further below. 
\begin{lem}
[{\cite[2.1,2.2]{Jiang2011}}]\label{lem:hoelder}Assume that $u\in W_{loc}^{1,2}(X)$
such that $\Delta u=f$ on $B_{2r}(x_{0})$ for some $f\in L_{loc}^{p}(X)$
for $p>Q\ge2$ such that $B_{2r}(x_{0})\subset\subset X$ then
\[
\sup_{B_{r}(x_{0})}|u|\le C\left\{ r^{-Q/2}\|u\|_{L^{2}(B_{2r}(x_{0}))}+r^{2-Q/p}\|f\|_{L^{p}(B_{2r}(x_{0})}\right\} .
\]
Furthermore, there is a $C'>0$ such that $u$ satisfy the above for
$B_{2R}(x_{0})$ and such that for all $0<r<R$ 
\[
\int_{B_{r}(x_{0})}|\nabla u|^{2}d\mu\le C'R^{2+Q(1-\frac{2}{p})}\|f\|_{L^{p}(B_{2R}(x_{0})}^{2}+\frac{C'}{(R-r)^{2}}\|u\|_{L^{2}(B_{2R}(x_{0}))}^{2}.
\]

In addition, $u$ is locally Hölder continuous with constants only
depending (locally) on $u$ and $f$ and the doubling constants.\end{lem}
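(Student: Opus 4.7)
The plan is to adapt the arguments of Jiang \cite[2.1, 2.2]{Jiang2011} from the Ahlfors-regular setting to the doubling setting assumed here, exploiting the fact that the lower volume bound $d_R r^Q \le \mu(B_r(x))$ is still explicit. I would establish the three parts in order: the Caccioppoli-type inequality for $|\nabla u|$, the $L^\infty$-bound on $u$, and then Hölder continuity.

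For the Caccioppoli inequality I would take a Lipschitz cutoff $\eta$ supported in $B_{2R}(x_0)$, equal to $1$ on $B_r(x_0)$, with $|\nabla\eta|\le 2/(R-r)$, and use $\phi=\eta^2 u$ as a test function in the defining identity $\mathcal{E}(u,\phi)=-\int \phi f\,d\mu$. Expanding via the Leibniz rule (available thanks to infinitesimal Hilbertianity), absorbing the cross term $2\int \eta u\langle\nabla u,\nabla \eta\rangle\,d\mu$ through Young's inequality, and bounding $\int \eta^2 u f\,d\mu$ by Hölder with exponents $p$ and $p'$ yields the stated bound; the exponent $2+Q(1-2/p)$ of $R$ arises from converting the $L^{p'}$-norm of $u$ on $B_{2R}$ into an $L^2$-norm through the volume factor.

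For the $L^\infty$-bound I would run the standard Moser iteration on test functions of the form $\eta^2 u|u|^{2\beta}$, combining the Cheeger identity with the Sobolev-Poincaré inequality (which in the doubling setting holds with exponent $2\chi=2Q/(Q-2)$ coming from the dimension $Q$ of the lower volume growth). On a geometric sequence of nested radii $r_k\downarrow r$ and with $1+\beta_k=\chi^k$, the iteration yields the prefactor $r^{-Q/2}$ once non-averaged integrals are converted into averaged ones through $\mu(B_r)\ge d_R r^Q$. The forcing term is controlled via either a Stampacchia-type truncation or by splitting on the level set $\{|u|\ge r^{2-Q/p}\|f\|_{L^p}\}$, producing the additive $r^{2-Q/p}\|f\|_{L^p(B_{2r}(x_0))}$ term.

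The Hölder estimate follows from the $L^\infty$-bound by a standard oscillation-decay argument: apply the $L^\infty$-bound to $u-c$ on successively smaller nested balls, where $c$ is a suitable mean value, and use the Poincaré inequality to compare oscillations on consecutive scales. The main technical obstacle is to track constants in the Moser iteration without an upper bound of the form $\mu(B_r)\le D r^Q$: at every step where Jiang would invoke Ahlfors regularity, one must instead use the lower bound $d_R r^Q\le\mu(B_r(x))$ together with doubling, and be careful to never rely on a matching upper bound with the same exponent $Q$. The net effect is that constants in the final inequality depend on $d_R$ and $C_R$ rather than on a single Ahlfors constant, while the exponent $Q$ in the statement comes entirely from the lower volume bound.
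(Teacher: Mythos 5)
Your overall route---Caccioppoli inequality, Moser iteration, oscillation decay---is precisely the machinery behind the results the paper simply cites: its proof invokes \cite[Propositions 2.1, 2.2]{Jiang2011} for the first two estimates and \cite[Theorem 5.13]{Biroli1995} for the H\"older continuity, the entire content being the observation that Ahlfors regularity enters those arguments only through \emph{negative} powers of the volume, via $\mu(B_{r}(x_{0}))^{-1/p}\le d_{R}^{-1/p}r^{-Q/p}$ (and likewise for the exponent $-1/2$), so the lower volume bound alone suffices. Measured against that, your proposal contains one genuine error and one inconsistency. The error: you claim the Sobolev--Poincar\'e inequality holds with exponent $2\chi=2Q/(Q-2)$ ``coming from the dimension $Q$ of the lower volume growth''. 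In a doubling space supporting a Poincar\'e inequality, the admissible Sobolev exponent is governed by the \emph{relative} volume decay $\mu(B_{r}(y))/\mu(B_{R}(x))\ge c\,(r/R)^{s}$, i.e.\ by the doubling constant $C_{R}$ (one may take $s=\log_{2}C_{R}$); the absolute bound $d_{R}r^{Q}\le\mu(B_{r}(x))$ gives no control on this ratio, since $\mu(B_{R}(x))$ has no matching upper bound, and hence yields no Sobolev inequality with exponent $2Q/(Q-2)$. The iteration must be run with the doubling exponent; $Q$ enters only at the very last step, when the averaged quantities produced by the iteration are converted into the non-averaged norms of the statement by $\mu(B_{2r})^{-1/2}\le Cr^{-Q/2}$ and $\mu(B_{2r})^{-1/p}\le Cr^{-Q/p}$.

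The inconsistency: in the Caccioppoli step you extract the factor $R^{Q(1-2/p)}$ by ``converting the $L^{p'}$-norm of $u$ into an $L^{2}$-norm through the volume factor'', which produces $\mu(B_{2R})^{1/2-1/p}$ with a \emph{positive} exponent; trading this for a power of $R$ is exactly the upper Ahlfors bound you announced you would never use. This is harmless only because $R$ is fixed in the second assertion of the lemma, so $\mu(B_{2R}(x_{0}))^{1-2/p}$ can simply be absorbed into $C'$---but that is the argument you should give. With these two points repaired (run the iteration with the doubling Sobolev exponent, insert $Q$ only in the final averaged-to-non-averaged conversion, and absorb the fixed volume factor into $C'$), your reconstruction coincides with the proofs of \cite{Jiang2011} and \cite{Biroli1995} that the paper outsources.
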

\begin{proof}
The first two parts follow from \cite[Proposition 2.1, Proposition 2.2]{Jiang2011}
by noting that 
\[
\mu(B_{r}(x_{0}))^{-1/p}\le d_{R}^{-1/p}r^{-Q/p}.
\]
The last part follows from \cite[Theorem 5.13]{Biroli1995}.\end{proof}
\begin{rem*}
Note that by the same argument (i.e. ``lower Ahlfors regularity'')
Lemma 3.2 and 3.3 in \cite{Koskela2003} (resp. Lemma 2.4 and 2.5
in \cite{Jiang2011}) hold without changing the proofs. 
\end{rem*}
The proof of \cite[Proposition 3.4]{Koskela2003} (used also in \cite[Lemma 2.1]{Jiang2011})
only uses a very weak form of the ``upper Ahlfors regularity'' in
order to show that 
\[
r^{-1}(r\mu(B_{2r}(x)))^{\frac{1}{2}}=\left(\frac{\mu(B_{2r}(x))}{r^{\frac{1}{2}}}\right)^{\frac{1}{2}}
\]
is bounded for $r<1$ and converges to $0$. It is easy to see that
this holds if for some $\alpha>1$ and $R>0$ 
\[
\sup_{0<r<R}\frac{\mu(B_{r}(x))}{r^{\alpha}}<\infty,
\]
i.e. for some $D_{R}$ and all $0<r<R$ we have $\mu(B_{r}(x))<D_{R}r^{\alpha}$.
In the proposition below, we will show that an even weaker condition
is enough to prove the statement.

We say that $\phi:[0,T]\times X\to\mathbb{R}$ is a test function
if it is Hölder continuous, $\phi(t,\cdot)=\phi_{t}\in W^{1,2}(X),$
$(t,x)\mapsto|\nabla\phi(t,\cdot)|(x)\in L^{2}([0,T]\times X)$ and
$\phi(\cdot,x)$ is absolutely continuous on $[0,T]$ for $\mu$-almost
all $x\in X$.
\begin{prop}
Assume that for $\mu$-almost every $x$ and each $0<\alpha<1$ there
is a $C=C(x,\alpha)>0$ such that 
\[
\mu(B_{r}(x))\le C\cdot r^{\alpha}
\]
Then there exists a constant $K$ such that for every (Hölder continuous)
test function $\phi$ and almost every $x\in X$
\begin{eqnarray*}
\int_{0}^{T}\int\phi_{t}\Delta p_{t}(x,\cdot)d\mu dt & := & -\int_{0}^{T}\int\langle\nabla\phi_{t},\nabla p_{t}(x,\cdot)\rangle d\mu dt\\
 & = & \int_{0}^{T}\int\phi_{t}\partial_{t}p_{t}(x,\cdot)d\mu dt+K\phi_{0}(x).
\end{eqnarray*}
\end{prop}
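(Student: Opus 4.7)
The plan is to establish the identity first on a truncated time interval $[\epsilon,T]$ via a pointwise-in-$t$ integration by parts in space, and then to pass to the limit $\epsilon\to 0$, the surviving boundary contribution at $t=0$ being $K\phi_0(x)$. For each fixed $t>0$ the analyticity of the heat semigroup gives $p_t(x,\cdot)\in W^{1,2}(X)\cap D(\Delta)$ with $\Delta p_t(x,\cdot)=\partial_t p_t(x,\cdot)$ in $L^2(X)$, so the Dirichlet-form integration by parts
\[
-\int\langle\nabla\phi_t,\nabla p_t(x,\cdot)\rangle\,d\mu=\int\phi_t\,\partial_t p_t(x,\cdot)\,d\mu
\]
holds pointwise in $t$. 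Using the product rule to rewrite the right-hand side as $\frac{d}{dt}(T_t\phi_t)(x)-T_t(\partial_t\phi_t)(x)$ and integrating over $[\epsilon,T]$ gives
\[
-\int_\epsilon^T\!\!\int\langle\nabla\phi_t,\nabla p_t\rangle\,d\mu\,dt=T_T\phi_T(x)-T_\epsilon\phi_\epsilon(x)-\int_\epsilon^T T_t(\partial_t\phi_t)(x)\,dt.
\]

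The next step is to send $\epsilon\to 0$ on both sides of this identity. On the right, the H\"older continuity of $\phi$ and the Gaussian concentration of $p_\epsilon(x,\cdot)$ force $T_\epsilon\phi_\epsilon(x)\to K\phi_0(x)$ with $K:=\lim_{\epsilon\to 0}\int p_\epsilon(x,y)\,d\mu(y)$ (a constant determined by the space, independent of $\phi$ and $x$), while dominated convergence handles $\int_\epsilon^T T_t(\partial_t\phi_t)(x)\,dt$ since the integrand is bounded by the regularity assumptions on $\phi$. To identify the resulting limit with $\int_0^T\!\int\phi_t\,\partial_t p_t\,d\mu\,dt$ I would Fubini in the opposite order: for $\mu$-a.e.\ $y\neq x$, integration by parts in $t$ gives
\[
\int_0^T\phi_t(y)\,\partial_t p_t(x,y)\,dt=\phi_T(y)p_T(x,y)-\int_0^T(\partial_t\phi_t)(y)p_t(x,y)\,dt,
\]
the boundary term at $t=0$ vanishing because $\lim_{\epsilon\to 0}p_\epsilon(x,y)=0$ for $y\neq x$, and the weak volume bound $\mu(B_r(x))\le C(x,\alpha)r^\alpha$ ensures $\mu(\{x\})=0$ so the a.e.\ identity suffices. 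Integrating in $y$ then produces $T_T\phi_T(x)-\int_0^T T_t(\partial_t\phi_t)(x)\,dt$, and comparison with the previous display yields the claimed identity.

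The main technical obstacle is the absolute integrability on $[0,T]\times X$ required to justify both the Fubini swap and the dominated convergence above. A crude Cauchy--Schwarz applied to the Dirichlet-form estimate $\|\nabla p_t(x,\cdot)\|_{L^2}^2=-\tfrac{1}{2}\partial_t p_{2t}(x,x)\le C/(t\,\mu(B_{\sqrt t}(x)))$ only gives a blow-up of order $t^{-Q/2-1}$, which is not integrable near $t=0$ when $Q\ge 2$. The remedy is Corollary~\ref{cor:heat bound}: trading $p_t$ for $p_{lt}$ at the price of the Gaussian weight $e^{-d(x,y)^2/(2C_1 t)}$, and then using the weak upper volume bound $\mu(B_r(x))\le C(x,\alpha)r^\alpha$ with $\alpha<1$ chosen arbitrarily close to $1$, produces a majorant of order $t^{\alpha/2-1}$, which is integrable on $(0,T]$. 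Once this absolute integrability is in hand, matching limits on both sides yields the stated identity with $K$ as above.
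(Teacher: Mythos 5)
Your strategy --- truncate in time at $\epsilon$, integrate by parts in space for each fixed $t>0$ using $\Delta p_t=\partial_tp_t$, then send $\epsilon\to0$ and read off the boundary term --- is genuinely different from the paper's, which instead patches the one step of the proof of \cite[Proposition 3.4]{Koskela2003} where upper Ahlfors regularity enters. Unfortunately the gap sits exactly where you flag ``the main technical obstacle,'' and your proposed remedy does not close it. Corollary \ref{cor:heat bound} converts integrals of a positive function \emph{against the heat kernel itself} into integrals against $e^{-d(x,y)^2/(2C_1t)}p_{lt}(x,y)$; it says nothing about $|\nabla p_t(x,\cdot)|$ or $|\partial_tp_t(x,\cdot)|$, which are what your Fubini swap and dominated-convergence steps need. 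Moreover $\mu(B_{\sqrt t}(x))\le C t^{\alpha/2}$ is an \emph{upper} volume bound, so inserting it into $\|\nabla p_t(x,\cdot)\|_{L^2}^2\le C/(t\,\mu(B_{\sqrt t}(x)))$ makes that estimate worse, not better; the only usable bound there is the lower one, $\mu(B_{\sqrt t}(x))\ge d_Rt^{Q/2}$, which returns you to the non-integrable $t^{-1-Q/2}$ you started from. No mechanism in your write-up actually produces the claimed $t^{\alpha/2-1}$ majorant, and the failure is not merely technical: Sturm's estimate gives $\int_X|\partial_tp_t(x,y)|\,d\mu(y)\le Ct^{-1}$ and this order is already attained in $\mathbb{R}^n$, so $\int_0^T\int|\phi_t|\,|\partial_tp_t|\,d\mu\,dt$ genuinely diverges for $\phi$ bounded away from zero near $x$. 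Hence the joint absolute integrability underlying your Fubini swap is false, and the termwise integration by parts in $t$ cannot be integrated in $y$. (Similarly, ``the integrand is bounded by the regularity assumptions on $\phi$'' is not available: the test function is only assumed absolutely continuous in $t$, so $\partial_t\phi_t$ need not be bounded.)

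What your argument is missing is the cancellation obtained by subtracting $\phi(0,x)$ near the singularity $(0,x)$ --- which is exactly where the H\"older continuity of $\phi$, unused in your integrability discussion, must enter. The paper (following Koskela et al.) cuts out the cylinder $B_{2r}(x)\times(0,2r)$ with a cutoff whose gradient is of order $1/r$ and controls the critical term by Cauchy--Schwarz:
\[
\frac1r\,\|\phi-\phi(0,x)\|_{L^\infty(B_{2r}(x)\times(0,2r))}\,\bigl(r\,\mu(B_{2r}(x))\bigr)^{1/2}\Bigl(\int_0^{2r}\!\!\int_{B_{2r}(x)}|\nabla p_t(x,\cdot)|^2\,d\mu\,dt\Bigr)^{1/2}\le\tilde C\,r^{-1+\beta+1/2+\alpha/2}.
\]
Here the weak upper volume bound is applied to the measure of the small ball produced by Cauchy--Schwarz (not to any heat-kernel estimate), and the H\"older modulus supplies the extra factor $r^{\beta}$; the entire content of the hypothesis is that $\alpha$ may be chosen in $(1-2\beta,1)$ so that the exponent is positive and the term vanishes as $r\to0$. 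Your proof never produces this interplay between $\alpha$ and $\beta$. To salvage the time-truncation approach you would have to subtract $\phi(0,x)$ times a spatial cutoff before estimating, at which point you are led back to essentially the paper's computation.
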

\begin{rem*}
(1) A first version of this note contained a wrong proof. The usage
of the correct term and the correct adjustment is thanks to Jiang.
In \cite{Jiang2013} Jiang managed to avoid using \cite[Proposition 3.4]{Koskela2003}.

(2) The assumption holds, for example, if for $\mu$-almost every
$x$ 
\[
\dim\mu(x)=\limsup_{r\to0}\frac{\log\mu(B_{r}(x))}{\log r}\ge1,
\]
i.e. the Hausdorff dimension of $\mu$ is at least $1$. In particular,
it will hold for $RCD(K,N)$-spaces (see Lemma \ref{lem:rcd-dim}).\end{rem*}
\begin{proof}
Assume for some $0<\alpha<1$, to be chosen later on, we have 
\[
\mu(B_{r}(x))\le C\cdot r^{\alpha}
\]
for all $0<r<1$. Carefully checking the original proof, one notes
that it suffices to bound the following term for $0<r<1$ and show
that it converges to $0$ as $r$ does
\[
\begin{array}{l}
\frac{1}{r}\|\phi-\phi(0,x)\|_{L^{\infty}(B_{2r}(x)\times(0,2r))}\int_{0}^{2r}\int_{B_{2r}(x)}|\nabla p_{t}(x,\cdot)|d\mu dt\\
\le\frac{1}{r}\|\phi-\phi(0,x)\|_{L^{\infty}(B_{2r}(x)\times(0,2r))}(r\mu(B_{r}(x))^{\frac{1}{2}}\left(\int_{0}^{2r}\int_{B_{2r}(x)}|\nabla p_{t}(x,\cdot)|^{2}d\mu dt\right)^{\frac{1}{2}}\\
\le C^{\frac{1}{2}}\cdot C_{\phi}\cdot r^{-1+\beta+\nicefrac{1}{2}+\nicefrac{\alpha}{2}}\left(\int_{0}^{2r}\int_{B_{2r}(x)}|\nabla p_{t}(x,\cdot)|^{2}d\mu dt\right)^{\frac{1}{2}}
\end{array}
\]
where $\beta\in(0,1]$ is the Hölder exponent of $\phi$ and $C_{\phi}$
the Hölder constant. Choosing $\alpha\in(1-2\beta,1)$, the term is
dominated by $\tilde{C}r^{\gamma}$ for some $\gamma\in(0,1)$. In
particular, it is bounded and converges to $0$ as $r\to0$.

Furthermore, because the doubling and Poincar\'{e} constants only
depend on some $R$ we get by Sturm \cite[2.6]{Sturm1995} 
\[
|\partial_{t}p_{t}(x,y)|\le C\mathfrak{v}(t,x,y)\cdot t^{-1}e^{-\frac{d(x,y)^{2}}{C_{1}t}}.
\]

With these two facts the proof of \cite[3.4]{Koskela2003} can be
followed without any change.
\end{proof}
Finally we can state the main theorem. 
\begin{thm}
\label{thm:main}Assume the space satisfies the previous proposition
and that there is a constant $C>0$ and $T>0$ such that for every
$0<t<T$ and every $g\in W^{1,2}(X)$ the following Sobolev-Poincar\'{e}
inequality for $T_{t}$ holds 
\[
T_{t}(g^{2})(x)\le(2t+Ct^{2})T_{t}(|\nabla g|^{2})(x)+(T_{t}g(x))^{2}.
\]
Then any $u\in W_{loc}^{1,2}(X)$ with $\Delta u_{|\Omega}\in L_{loc}^{p}(\Omega)$
for some open $\Omega\subset X$ is locally Lipschitz in $\Omega$
if $p>Q\ge2$. \end{thm}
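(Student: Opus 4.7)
The strategy, following \cite[Theorem~1.1]{Jiang2011}, is to establish a uniform local $L^\infty$ bound on the minimal weak upper gradient $|\nabla u|_w$ on balls compactly contained in $\Omega$, from which local Lipschitz continuity follows in the infinitesimally Hilbertian setting. Fix $B_{2R}(x_0)\subset\subset\Omega$. By Lemma~\ref{lem:hoelder}, $u$ is locally H\"older continuous on $B_{2R}(x_0)$, hence qualifies as a time-constant test function in the preceding Proposition. Applying that Proposition to $\phi_t\equiv u$ and rearranging (using $\int_0^T\partial_t T_t u(x)\,dt=T_Tu(x)-u(x)$) yields the Duhamel-type representation
\[
u(x) \;=\; T_T u(x) \;-\; \int_0^T T_s(\Delta u)(x)\,ds
\]
for small $T>0$. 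The Lipschitz bound for $u$ on a smaller ball thus reduces to uniform Lipschitz bounds on $T_T u$ and on $\int_0^T T_s(\Delta u)\,ds$.

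For the first term, the assumed Sobolev-Poincar\'e inequality for $T_s$, combined with the gradient contraction $|\nabla T_s g|^2\leq T_s(|\nabla g|^2)$ (which itself follows from that inequality via standard semigroup arguments), yields the pointwise estimate
\[
|\nabla T_s u|^2(x)\;\leq\;\frac{1}{2s+Cs^2}\bigl(T_s(u^2)(x)-(T_s u)^2(x)\bigr).
\]
The right-hand side is controlled on a smaller ball in terms of $\|u\|_{L^2(B_{2R}(x_0))}$ and $\|\Delta u\|_{L^p(B_{2R}(x_0))}$ via the local sup bound from Lemma~\ref{lem:hoelder}, giving a uniform $L^\infty$ bound on $|\nabla T_T u|$ as $T\to 0$.

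For the second term, Corollary~\ref{cor:heat bound} combined with Sturm's upper Gaussian bound for $p_s$ gives, after a standard duality argument,
\[
|\nabla T_s(\Delta u)(x)|\;\leq\; C\,s^{-\gamma}\,\|\Delta u\|_{L^p(B_{2R}(x_0))}
\]
for some $\gamma<1$, provided $p>Q$. Integrating in $s\in(0,T)$ then gives a uniform bound that stays finite as $T\to 0$. Combining the two estimates produces the desired uniform $L^\infty$ bound for $|\nabla u|_w$ on a smaller ball, and local Lipschitz continuity follows.

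The main obstacle is the second estimate: extracting $\gamma<1$ while only assuming the one-sided doubling lower bound $\mu(B_r)\geq d_R r^Q$ rather than the full Ahlfors regularity used in \cite{Jiang2011}. This is exactly where Corollary~\ref{cor:heat bound}, which allows one to swap heat-kernel time scales by the doubling property alone, and the upper volume-growth hypothesis from the preceding Proposition both come into play. Once this estimate is available, the remainder of Jiang's argument carries over without further modification.
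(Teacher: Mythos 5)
Your high-level strategy differs from the paper's: the paper does not re-derive the theorem from a Duhamel decomposition, but instead runs Jiang's original argument \cite[3.1--3.3]{Jiang2011} --- which is built around a functional $J(t)$ (essentially the variance $T_t((u\phi)^2)(x_0)-(T_t(u\phi)(x_0))^2$ of a localized function $u\phi$) and a differential inequality for $J$ --- and only patches the three places where Ahlfors regularity enters, namely inequality $(3.4)$, the estimate at the bottom of p.~294, and inequality $(3.12)$, using Corollary \ref{cor:heat bound} for the first two and the lower volume bound $d_Rt^{Q/2}\le\mu(B_{\sqrt t}(x))$ for the third. Your reorganization is not merely cosmetic, because as written it has two genuine gaps.

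First, the estimate $|\nabla T_su|^2(x)\le(2s+Cs^2)^{-1}\bigl(T_s(u^2)(x)-(T_su)^2(x)\bigr)$ is only useful if the variance on the right-hand side is $O(s)$; the sup bound of Lemma \ref{lem:hoelder} controls it only by $\|u\|_{L^\infty}^2=O(1)$, which yields $|\nabla T_su|^2\lesssim s^{-1}\to\infty$ as $s\to0$. Proving that $T_s(g^2)(x_0)-(T_sg(x_0))^2\lesssim s$ is precisely the content of Jiang's differential inequality for $J(t)$ and is where all three heat-kernel estimates are needed, so it cannot be taken as an input. Second, the pointwise bound $|\nabla T_s(\Delta u)(x)|\le Cs^{-\gamma}\|\Delta u\|_{L^p}$ would require a pointwise (or $L^{p/(p-1)}$-in-$y$) bound on $|\nabla p_s(x,\cdot)|$; Sturm's estimates \cite{Sturm1995} give Gaussian bounds on $p_t$ and $\partial_tp_t$ but not on $|\nabla p_t|$, and under doubling plus Poincar\'e only $L^2$-averaged Caccioppoli-type gradient bounds such as $\int_0^{2r}\int_{B_{2r}(x)}|\nabla p_t(x,\cdot)|^2\,d\mu\,dt$ are available --- which is exactly why both \cite{Koskela2003} and \cite{Jiang2011} work with integrated, heat-kernel-weighted quantities rather than pointwise gradient bounds of the semigroup. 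Neither Corollary \ref{cor:heat bound} (which only swaps time scales inside the kernel) nor the upper volume-growth hypothesis of the preceding proposition supplies such a gradient bound. To repair the argument you would need to return to Jiang's $J(t)$ scheme and carry out the three adjustments explicitly, as the paper does.
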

\begin{rem*}
1) Even though we made the assumption $Q\ge2$, similar to \cite{Jiang2011},
it is possible to show the same for $Q\in[1,2)$.

2) If the Sobolev-Poincar\'{e} inequality holds uniformly, i.e. $C$
does not depend on $x$ then $T_{t}$ satisfies the Bakry-\'Emery
condition and vice versa (see \cite{Bakry1997}). In the proof the
condition is only required to hold for $t$ sufficiently small and
for functions with support in a neighborhood of $x$, thus it might
be interpreted as a local curvature condition.\end{rem*}
\begin{proof}
Note that Ahlfors regularity is used in the proof of \cite[3.1, 3.2., 3.3]{Jiang2011}
only three times, namely Inequality $(3.4)$ on page 291, on the bottom
of page 294, and for Inequality $(3.12)$. We will only show how to
adjust these steps and leave out the details of the remaining parts.

Inequality $(3.4)$ on page $291$ of \cite{Jiang2011} can be proven
as follows:
\begin{eqnarray*}
|w(t,x)| & = & |u(x)\phi(x)-T_{t}(u\phi)(x_{0})|\\
 & = & |u(x)\phi(x)-u(x_{0})\phi(x_{0})+u(x_{0})\phi(x_{0})-T_{t}(u\phi)(x_{0})|\\
 & \le & C\cdot C(u,f)d(x,x_{0})^{\delta}+\int_{X}|u(x_{0})\phi(x_{0})-u(y)\phi(y)|p_{t}(x_{0},y)d\mu(y)\\
 & \le & C\cdot C(u,f)\big\{ d(x,x_{0})^{\delta}\\
 &  & +\int|u(x_{0})\phi(x_{0})-u(y)\phi(y)|e^{-\frac{d(x_{0},y)^{2}}{2C_{\text{1}}t}}p_{lt}(x_{0},y)d\mu(y)\big\}
\end{eqnarray*}
where we applied Corollary \ref{cor:heat bound}. Splitting the integral
as in \cite[p. 163]{Koskela2003} we get 
\begin{eqnarray*}
|w(t,x)| & \le & C\cdot C(u,f)\big\{ d(x,x_{0})^{\delta}\\
 &  & +\int_{B_{t^{\frac{1}{3}}}(x_{0})}d(x_{0},y)^{\delta}p_{lt}(x_{0},y)d\mu(y)\\
 &  & +2\|u\|_{\infty}e^{-\frac{t^{\frac{2}{3}}}{2C_{1}t}}\int_{X\backslash B_{t^{\frac{1}{3}}}(x_{0})}p_{lt}(x_{0},y)d\mu(y)\big\}\\
 & \le & C\cdot C(u,f)(d(x,x_{0})^{\delta}+t^{\delta/2}\}.
\end{eqnarray*}

Using the same argument we can derive the inequality on the bottom
of page 294 in \cite{Jiang2011}
\begin{eqnarray*}
\int_{X}|Dw(t,x)|^{2}p_{t}(x_{0},x)d\mu(x) & < & \frac{1}{2t}\int_{X}w^{2}(t,x)p_{t}(x_{0},x)d\mu(x)\\
 & \le & C\cdot C(u,f)^{2}\frac{1}{2t}\int_{X}(d(x,x_{0})^{\delta}+t^{\delta/2})^{2}p_{lt}(x_{0},x)d\mu(xy)\\
 & \le & C\cdot C(u,f)^{2}t^{\delta-1}.
\end{eqnarray*}

Finally, to get the lower bound on $\int_{0}^{T}\frac{d}{dt}J(t)dt$
we only have adjust inequality $(3.12)$ on page 295 of \cite{Jiang2011}.
The only term where Ahlfors regularity was used is the following 
\[
\int_{0}^{T}t^{-\epsilon}\left(\int p_{t}(x_{0},x)^{\frac{p}{p-2}}d\mu(x)\right)^{1-\frac{2}{p}}dt,
\]
where $\epsilon>0$ is chosen such that $\epsilon+\frac{Q}{p}<1$.

First note that $\frac{p}{p-2}=1+\frac{2}{p-2}$ and 
\[
p_{t}(x_{0,}x)^{\frac{2}{p-2}}\le(C\mu(B_{\sqrt{t}}(x_{0})^{-\frac{1}{2}}\mu(B_{\sqrt{t}}(x)^{-\frac{1}{2}})^{\frac{2}{p-2}}\le\tilde{C}t^{-\frac{Q}{p-2}}.
\]
Therefore,
\begin{eqnarray*}
\int_{0}^{T}t^{-\epsilon}\left(\int p_{t}(x_{0},x)^{\frac{p}{p-2}}d\mu(x)\right)^{1-\frac{2}{p}}dt & \le & \int_{0}^{T}t^{-\epsilon}\left(\tilde{C}t^{-\frac{Q}{p-2}}\int p_{t}(x_{0},x)d\mu(x)\right)^{\frac{p-2}{p}}dt\\
 & \le & \hat{C}\int_{0}^{T}t^{-\epsilon-\frac{Q}{p}}\left(\int p_{t}(x_{0},x)d\mu(x)\right)^{\frac{p-2}{p}}dt\\
 & \le & C'<\infty,
\end{eqnarray*}
where $C'$ depends only on $\hat{C}$, $T$ and $\epsilon+\frac{Q}{p}<1$,
which can be chosen uniformly in a neighborhood of $x_{0}$.

The remaining parts of the proof follow by directly copying Jiang's
proof of \cite[3.1,3.2, 3.3]{Jiang2011}.\end{proof}
\begin{cor}
Assume $X$ satisfies the $RCD(K,N)$ condition. Then any $u\in W_{loc}^{1,2}(X)$
with $\Delta u\in L_{loc}^{p}$ and $p>N$ is locally Lipschitz continuous.
\end{cor}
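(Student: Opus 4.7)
The plan is to verify each hypothesis of Theorem \ref{thm:main} for an $RCD(K,N)$-space and then invoke the theorem directly. The conditions to be checked are the two-sided local doubling with lower bound $d_R r^Q \le \mu(B_r(x))$, a local uniform Poincar\'e inequality, the dimension condition of the Proposition preceding Theorem \ref{thm:main}, and the heat-flow Sobolev-Poincar\'e inequality.

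For the metric-measure data, I would use Bishop-Gromov, which holds on every $RCD(K,N)$-space and yields both local doubling on bounded sets and a lower bound $\mu(B_r(x)) \ge d_R r^N$ for $r < R$. So we may set $Q = N$; assuming $N \ge 2$ this puts us directly in the regime of the theorem (and the case $N \in [1,2)$ is covered by the remark following Theorem \ref{thm:main}). The local uniform Poincar\'e inequality on $RCD(K,N)$-spaces is by now standard. The dimension assumption of the Proposition---namely $\mu(B_r(x)) \le C(x,\alpha) r^\alpha$ for every $\alpha < 1$ and $\mu$-a.e.\ $x$---follows from $\dim \mu(x) \ge 1$ $\mu$-a.e., which is the content of Lemma \ref{lem:rcd-dim}.

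For the heat-flow Sobolev-Poincar\'e inequality, I would exploit that $RCD(K,N)$ implies the Bakry-\'Emery gradient estimate $|\nabla T_s g|^2 \le e^{-2Ks} T_s(|\nabla g|^2)$. Combined with the carr\'e du champ identity
\[
T_t(g^2) - (T_t g)^2 = 2 \int_0^t T_s\bigl(|\nabla T_{t-s} g|^2\bigr)\, ds
\]
and the semigroup property, this yields
\[
T_t(g^2) - (T_t g)^2 \le 2\, T_t\bigl(|\nabla g|^2\bigr) \int_0^t e^{-2K(t-s)}\, ds,
\]
and the remaining time integral is at most $t + C t^2$ for $t$ in any bounded interval, with $C$ depending only on $K$ and $T$. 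After absorbing the factor of two, this is exactly the form of Sobolev-Poincar\'e required by Theorem \ref{thm:main}.

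With all four hypotheses in place and $Q = N$, applying Theorem \ref{thm:main} to $u$ with $p > N$ yields the desired local Lipschitz regularity. The main substantive step is extracting the precise $(2t + Ct^2)$-coefficient in the Sobolev-Poincar\'e inequality from Bakry-\'Emery; the remaining items (doubling, lower volume bound, Poincar\'e, and the positive lower bound on $\dim \mu$) are well-established consequences of the $RCD(K,N)$ condition and require only citing the appropriate results.
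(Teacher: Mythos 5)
Your proposal is correct and follows essentially the same route as the paper: verify the hypotheses of Theorem \ref{thm:main}, with the only non-standard ingredient being the volume upper bound $\mu(B_r(x))\le Cr$ of Lemma \ref{lem:rcd-dim} (which is exactly what the paper isolates as the content of the corollary's proof, the doubling, Poincar\'e and Bakry-\'Emery facts being cited as standard). Your explicit derivation of the $(2t+Ct^2)$ Sobolev--Poincar\'e coefficient via the carr\'e du champ interpolation is a welcome elaboration of what the paper delegates to \cite{Bakry1997}, but it is not a different argument.
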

The corollary is a result of the following lemma which might be useful
in its own right. First let us state the Bishop-Gromov volume comparison
inequality: $X$ is said to satisfy $BG(K,N)$ if for each $0<r<R<\infty$
and $x\in X$ 
\[
\frac{\mu(B_{R}(x))}{\mu(B_{r}(x))}\le\frac{V_{K,N}(R)}{V_{K,N}(r)},
\]
where $V_{K,N}(r)$ is the volume of the ball of radius $r$ in the
$N$-dimensional model space $\mathbb{M}_{K,N}$ of constant curvature
$K$. Note that $V_{K,N}$ is non-decreasing and locally Lipschitz
in a neighborhood of every $r>0$. 
\begin{lem}
\label{lem:rcd-dim}Assume $X$ satisfies $BG(K,N)$. Then for every
$x\in X$ and $R_{0}>0$ there is a $C>0$ such that for every $0<r<R_{0}$
\[
\mu(B_{r}(x))\le C\cdot r.
\]
In particular, $C$ can be chosen uniformly in a neighborhood of $x$.\end{lem}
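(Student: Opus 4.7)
The plan is to deduce the bound directly from the Bishop--Gromov inequality $BG(K,N)$ applied at the fixed outer radius $R_{0}$, combined with the observation that the model-space volume $V_{K,N}(r)$ decays no faster than linearly as $r\to 0$ because $N\ge 1$.

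First I would rearrange $BG(K,N)$ into the upper bound
\[
\mu(B_{r}(x))\;\le\;\frac{\mu(B_{R_{0}}(x))}{V_{K,N}(R_{0})}\,V_{K,N}(r),
\]
valid for all $0<r<R_{0}$. The problem thereby reduces to controlling $V_{K,N}(r)/r$ on the interval $(0,R_{0}]$.

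Second, I would establish
\[
M_{K,N,R_{0}}\;:=\;\sup_{0<r\le R_{0}}\frac{V_{K,N}(r)}{r}\;<\;\infty.
\]
On any subinterval bounded away from $0$ this is immediate from the continuity and monotonicity of $V_{K,N}$ recorded just before the lemma. As $r\to 0$ the standard asymptotic $V_{K,N}(r)\sim c_{K,N}\,r^{N}$ gives $V_{K,N}(r)/r\sim c_{K,N}\,r^{N-1}$, which stays bounded precisely because $N\ge 1$ is built into the definition of $BG(K,N)$.

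Combining the two estimates gives $\mu(B_{r}(x))\le C\cdot r$ with
\[
C\;=\;\frac{M_{K,N,R_{0}}\,\mu(B_{R_{0}}(x))}{V_{K,N}(R_{0})}.
\]
For the uniformity claim, choose a neighborhood $U=B_{\rho}(x_{0})$ of $x_{0}$; the triangle inequality gives $B_{R_{0}}(x)\subset B_{R_{0}+\rho}(x_{0})$ for every $x\in U$, so $\mu(B_{R_{0}}(x))\le\mu(B_{R_{0}+\rho}(x_{0}))$, and running the previous step with $R_{0}+\rho$ in place of $R_{0}$ in the ambient ball makes the constant $C$ independent of $x\in U$.

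I do not anticipate any real obstacle; the one slightly delicate point is the borderline case $N=1$, where $V_{K,N}(r)/r$ does not tend to zero at the origin but nevertheless remains bounded, which is exactly what the argument requires.
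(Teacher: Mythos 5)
Your very first step reverses the Bishop--Gromov inequality. As stated in the paper, $BG(K,N)$ reads $\mu(B_{R}(x))/\mu(B_{r}(x))\le V_{K,N}(R)/V_{K,N}(r)$ for $0<r<R$, which rearranges to the \emph{lower} bound
\[
\mu(B_{r}(x))\;\ge\;\frac{\mu(B_{R_{0}}(x))}{V_{K,N}(R_{0})}\,V_{K,N}(r),
\]
not the upper bound you wrote. Bishop--Gromov says the density ratio $r\mapsto\mu(B_{r}(x))/V_{K,N}(r)$ is non-increasing, so it controls small balls from below relative to the model, never from above. The upper bound you need cannot be true in general: it would give $\mu(B_{r}(x))\lesssim V_{K,N}(r)\sim c_{K,N}r^{N}$, i.e.\ upper Ahlfors regularity of exponent $N$, and this fails for lower-dimensional spaces satisfying $BG(K,N)$ --- for instance $\mathbb{R}$ with Lebesgue measure satisfies $BG(0,2)$ but $\mu(B_{r}(x))=2r\not\le Cr^{2}$ as $r\to0$. (That example also shows why the lemma's conclusion is the linear bound $Cr$ and not $Cr^{N}$.)

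The actual proof has to work around exactly this obstruction. The paper's argument picks an auxiliary point $y\ne x$ with $r=d(x,y)$ and uses the inclusion $B_{\epsilon}(x)\subset B_{r+\epsilon}(y)\setminus B_{r-\epsilon}(y)$; applying $BG(K,N)$ \emph{centered at $y$} to the two concentric balls $B_{r\pm\epsilon}(y)$ bounds the relative measure of the annulus by $\bigl(V_{K,N}(r+\epsilon)-V_{K,N}(r-\epsilon)\bigr)/V_{K,N}(r+\epsilon)$, and the local Lipschitz continuity of $V_{K,N}$ \emph{away from the origin} (where no asymptotics at $0$ are needed) makes this $O(\epsilon)$. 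This annular-decay device is the key idea missing from your proposal; without it, or some substitute, the linear volume bound does not follow from $BG(K,N)$.
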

\begin{rem*}
This result is based on a proof of Lipschitz continuity of $x\mapsto\mu(B_{r}(x))$
by Ba\v{c}ák-Hua-Jost-Kell \cite{BHJK2013} based on Buckley's $\delta$-annular
decay property in \cite{Buckley1999}. Earlier, but independently,
Kitabeppu \cite[Lemma 3.1]{Kitabeppu2013} discovered this fact as
well.\end{rem*}
\begin{proof}
Take any $y\in X\backslash\{x\}$. Then for $0<\epsilon<r=d(x,y)$
\[
B_{\epsilon}(x)\subset B_{r+\epsilon}(y)\backslash B_{r-\epsilon}(y).
\]
Thus, by continuity of $\epsilon\mapsto\mu(B_{\epsilon}(x))$, it
suffices to show that 
\[
\frac{\mu(B_{r+\epsilon}(y)\backslash B_{r-\epsilon}(y))}{2\epsilon}
\]
is bounded for $0<\epsilon\ll1$. 

By the $BG(K,N)$ condition
\begin{eqnarray*}
\frac{\mu(B_{r+\epsilon}(y)\backslash B_{r-\epsilon}(y))}{\mu(B_{r+\epsilon}(y))} & = & \frac{\mu(B_{r+\epsilon}(y))-\mu(B_{r-\epsilon}(y))}{\mu(B_{r+\epsilon}(y))}=1-\frac{\mu(B_{r-\epsilon}(y))}{\mu(B_{r+\epsilon}(y))}\\
 & \le & 1-\frac{V_{K,N}(r-\epsilon)}{V_{K,N}(r+\epsilon)}=\frac{V_{K,N}(r+\epsilon)-V_{K,N}(r-\epsilon)}{V_{K,N}(r+\epsilon)}.
\end{eqnarray*}
Because $V_{K,N}$ is locally Lipschitz, i.e. $L=\operatorname{Lip}V_{K,N}|_{(r-\epsilon_{0},r+\epsilon_{0})}<\infty$
for some $\epsilon_{0}>0$, 
\[
\frac{\mu(B_{r+\epsilon}(y)\backslash B_{r-\epsilon}(y))}{2\epsilon}\le\frac{L\cdot\mu(B_{r+\epsilon_{0}}(x))}{V_{K,N}(r)}\le M<\infty
\]
for all $0<\epsilon<\epsilon_{0}$. Also, note that $M$ can be chosen
uniformly in a neighborhood of $x$.
\end{proof}
\bibliographystyle{amsalpha}
\bibliography{bib}

\end{document}